\newcommand{\vp}{\varepsilon}
\newcommand{\cl}[1]{\mathcal{#1}}
\theoremstyle{plain}
\newtheorem{thm}{Theorem}[section]
\newtheorem{lem}[thm]{Lemma}
\theoremstyle{definition}
\theoremstyle{remark}
\numberwithin{equation}{section}
\def\C{\bb  C}
\def\phi{\varphi}
\begin{document}

\title{A note on $C^*$-algebras with Lifting Property}

\author{by\\
Gilles  Pisier\footnote{ORCID    0000-0002-3091-2049}   \\
Sorbonne Universit\'e\\
and\\ Texas  A\&M  University}

\def\C{\mathscr{C}}
\def\B{\mathscr{B}}
\def\I{\cl  I}
\def\e{\cl  E}
 
  \def\a{\alpha}
 
       \def\t{\theta}
       
  \maketitle
  
\begin{abstract}      
We give several simple and easy complements 
to our recent paper on $C^*$-algebras with the lifting property (LP in short).
In particular we observe that the local lifting property (LLP in short)
associated to the class of max-contractions implies the
lifting property for max-contractions.
\end{abstract}
  
  \medskip{MSC (2010): 46L06, 46L07, 46L09} 
  
    \medskip

  \def\L{\cl L}
  
  
  Suppose we are given a class of maps $\cl F$
  that includes maps defined from an operator subspace
  $E\subset A$ of a $C^*$-algebra into another $C^*$-algebra $B$.
  The latter maps are denoted by  $\cl F(E,B)$ so that we may think of
  $\cl F$ as the union of all possible $\cl F(E,B)$ (but we need to remember
  the ambient $A$ for $E$).
  
  Let $C$ be another $C^*$-algebra.
  We denote by  $C \otimes_{\max} E$
  (resp.  $E \otimes_{\max} C$) the closure of
  the algebraic tensor product $C \otimes E$ (resp.  $E \otimes  C$) in $C \otimes_{\max} A$
  (resp. $A  \otimes_{\max} C$). 
  We should warn the reader that   this definition depends on the embedding
  $E\subset A$ and not only on the operator space structure of $E$.
  
A unital $C^*$-algebra
  $A$ is said to have the  $\cl F$-LP (resp.  $\cl F$-LLP)
  if for any unital quotient $C^*$-algebra $C/I$ and any map
  $u\in \cl F(A,C/I)$ there is a map $\hat u\in \cl F(A,C)$ that lifts $u$
  (resp. for any finite dimensional operator system
  $E\subset A$ the restriction $u_{|E}$ admits a lifting in $\cl F(E,C)$).
  
  This definition is modeled on the ones introduced by Kirchberg \cite{Kir}
  for the class $\cl F_{ucp}$ of unital c.p. maps.
  In that case we refer to  $\cl F$-LP (resp.  $\cl F$-LLP) simply as LP (resp. LLP).
  
  We are particularly interested in the question whether 
  the existence of local liftings implies that of liftings for various $\cl F$.
 Whether this is true  turns out to be often a well known open problem. 
  It is open for instance when $\cl F$ is either the class of all contractions
  (i.e. $\cl F=\{ u: E \to B \mid \|u\|\le 1\}$ 
  or that of positive contractions.
  
  Whether $\cl F$-LLP implies $\cl F$-LP is also open
  when $\cl F$ is the class of all complete contractions (i.e. maps such that
  $\|u\|_{cb} \le 1$). In that case the $\cl F$-LP and the $\cl F$-LLP
  have been extensively  studied by Ozawa in \cite{Oz} 
  under the names of OLP and OLLP respectively.
  
  See the references for more recent information on the LP and the LLP.
  
  The question whether LLP implies LP seems connected to the Connes embedding problem.
   The recent paper \cite{JNVWY}     
  posted on arxiv in Jan. 2020 by
 Ji, Natarajan, Vidick,  Wright, and  Yuen   contains a negative solution to the 
 Connes embedding problem. See also \cite{Vid}.\\
 Indeed, as pointed out in \cite{Kir} the implication [LLP implies LP] would hold if the Connes embedding problem had an affirmative answer.
  This is the main motivation for our trying to disprove the latter implication.
  A direct proof would have the great advantage of producing a hopefully simpler negative solution to the  Connes embedding problem,
  but another possible route could be to prove that actually the validity of
   [LLP implies LP] implies a positive solution to the Connes embedding problem. One would then solve the  [LLP implies LP] problem negatively
   as a consequence of the results of \cite{JNVWY,Vid}.
   There remains the possibility that (as conjectured by Kirchberg)  [LLP implies LP] is true. Of course
   this would be a major result.
  
  In \cite{157} we introduced the class of maximally bounded (m.b. in short) maps in parallel with that of completely bounded (c.b. in short) maps.
  A map $u: E \to B$ is said to be m.b. if there is a constant $c$ such that
  for any $t\in \C \otimes E$ (algebraic tensor product)
  we have
  $$\| [Id \otimes u](t) \|_{\C \otimes_{\max}{} B}\le c \|t\|_{\C \otimes_{\max}{} E}.$$
When this holds it actually holds automatically for any $C^*$-algebra $C$
in place of $\C$.\\
We denote by $\|u\|_{mb}$ the smallest $c$ for which this holds, and 
by $MB(E,B)$ the space of m.b. maps equipped with this norm.

  \begin{thm} A separable unital $C^*$-algebra
  $A$ has the LP if and only if 
  for any f.d. subspace $E\subset A$ and any   $C^*$-algebra
  $C$ we have  a contractive inclusion
\begin{equation}\label{1} MB(E, C^{**} )\subset  MB(E,C) ^{**}.\end{equation}
  Moreover,  it suffices for this to hold to have it for $C=\C$.
  \end{thm}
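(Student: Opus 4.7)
The plan is to interpret \eqref{1} as a weak*-density statement and then bridge this with the u.c.p.\ lifting property via biduality of the ideal and a diagonal extraction. Since $E$ is finite-dimensional, the canonical Banach-space identification $L(E,C)^{**}\cong L(E,C^{**})$ holds isometrically, so \eqref{1} amounts to the assertion that the closed unit ball of $MB(E,C)$ is weak*-dense, in the pointwise weak*-topology, in the closed unit ball of $MB(E,C^{**})$. This is the reformulation I would work with throughout.

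For the direction ``\eqref{1} $\Rightarrow$ LP'', start from a u.c.p.\ map $u:A\to C/I$ and lift it first to a u.c.p.\ map $\tilde u:A\to C^{**}$, using that $(C/I)^{**}$ is a direct summand of $C^{**}$ (via the central projection attached to $I^{\perp\perp}$), so the canonical surjection $C^{**}\to (C/I)^{**}$ admits a u.c.p., weak*-continuous section. For each f.d.\ $E\subset A$, $\tilde u|_E$ lies in the unit ball of $MB(E,C^{**})$ and hence, by \eqref{1}, in the unit ball of $MB(E,C)^{**}$; thus it is a pointwise weak*-limit of m.b.\ contractions $v_\alpha^E:E\to C$. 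Composing with the quotient $q:C\to C/I$ yields approximants of $u|_E$ converging pointwise-weakly in $C/I$. Using the separability of $A$, I would run a diagonal/ultrafilter extraction over an increasing sequence $(E_n)$ of f.d.\ subspaces with dense union to manufacture a single m.b.\ contraction $\hat v:A\to C$, and finally apply a standard convexity-and-weak*-compactness averaging step to upgrade $\hat v$ to a u.c.p.\ lift of $u$.

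For the direction ``LP $\Rightarrow$ \eqref{1}'', given $\phi\in MB(E,C^{**})$ with $\|\phi\|_{mb}\le 1$ and f.d.\ $E\subset A$, I would realize $C^{**}$ as a quotient in which LP can be invoked: typically as a direct summand of $\ell^\infty(C)/c_0(C)$ or of a suitable ultrapower of $C$. Extend $\phi$ to an m.b.\ contraction on all of $A$ (using the m.b.\ extension machinery from \cite{157}), view the extension as taking values in the quotient, and apply LP to lift it to an m.b.\ contraction $A\to\ell^\infty(C)$. The coordinate sequence of this lift, restricted to $E$, provides the required net of m.b.\ contractions in $MB(E,C)$ weak*-approximating $\phi$, which witnesses that $\phi$ lies in the unit ball of $MB(E,C)^{**}$.

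The reduction to $C=\mathscr{C}$ is immediate from the universal character of $\mathscr{C}$ in the definition of the m.b.\ norm recalled just before the theorem: m.b.\ estimates tested on $\mathscr{C}$ propagate automatically to any $C^*$-algebra, so verifying \eqref{1} for $C=\mathscr{C}$ yields the general statement at once. I expect the main obstacle to lie in the ``$(\Leftarrow)$'' direction, specifically in upgrading the locally obtained m.b.\ weak*-approximate lifts into a single globally defined u.c.p.\ exact lift; that step is where the separability of $A$ and the contractive nature of \eqref{1} must be used in a coordinated way.
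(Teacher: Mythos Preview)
The paper does not prove this theorem in-text; it defers entirely to \cite[Th.~3.1 and Cor.~4.6]{157}, so there is no in-paper argument to compare against and your plan must stand on its own.

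Your outline has the right architecture, but both directions have a real gap precisely where the m.b.\ and u.c.p.\ notions must be linked. In \eqref{1}$\Rightarrow$LP, after producing m.b.\ contractive approximate local lifts you propose to ``upgrade $\hat v$ to a u.c.p.\ lift'' by a ``standard convexity-and-weak*-compactness averaging step''. No such step exists: m.b.\ contractions on $E\subset A$ are in general neither positive nor completely contractive, and convex averaging stays within the m.b.\ class. The genuine bridge goes through Kirchberg's identity $\|\cdot\|_{mb}=\|\cdot\|_{dec}$ for maps between $C^*$-algebras and a $2\times2$-matrix device that rebuilds c.p.\ pieces before Arveson's principle can be invoked. (Your diagonal extraction is also suspect: maps defined on different $E_n$ do not glue to a global m.b.\ map on $A$, and the m.b.\ norm of a limit is not controlled by the m.b.\ norms on the $E_n$.) Symmetrically, in LP$\Rightarrow$\eqref{1} you write ``apply LP to lift'' an m.b.\ extension $\Phi:A\to C^{**}$, but LP lifts \emph{u.c.p.}\ maps; one must first pass to the dec-decomposition of $\Phi$ and lift the c.p.\ components. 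That LP yields lifting of m.b.\ contractions is exactly the implication (i)$\Rightarrow$(ii) of the next theorem in this paper, established there via \cite[Th.~3.5]{157}; it is not free. Finally, your reduction to $C=\mathscr C$ is argued incorrectly: that $\mathscr C$ tests the m.b.\ \emph{norm} does not imply that the \emph{inclusion} \eqref{1} for $\mathscr C$ yields it for arbitrary $C$, since both sides of \eqref{1} depend on $C$ in a way the norm identity does not govern. The reduction in \cite{157} is indirect: \eqref{1} for $C=\mathscr C$ already suffices to establish LP, and then LP gives \eqref{1} for every $C$.
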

  
  For a proof see \cite[Th. 3.1]{157} and   \cite[Cor. 4.6]{157}.
  As observed in \cite[Rem. 3.3]{157} the converse inclusion holds in full generality.

  Let us denote by $\cl F_{mc}$ the class of all maps $u$ with $\|u\|_{mb}
  \le 1$, that we call $\max$-contractions.
  
  \begin{thm} Let  $A$  be a separable unital $C^*$-algebra. The following are equivalent:
 \begin{itemize}
 \item [{\rm(i) } ] $A$ has the LP.
 \item[{\rm (ii) }] $A$ has the $\cl F_{mc}$-LP.
 \item[ {\rm(iii) }] $A$ has the $\cl F_{mc}$-LLP.
 \end{itemize}
  \end{thm}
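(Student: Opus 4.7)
My plan is to handle the trivial implication (ii)$\Rightarrow$(iii) at once, and then to establish (i)$\Rightarrow$(ii) and (iii)$\Rightarrow$(i) via the characterisation of LP in the previous theorem --- namely, the contractive bidual inclusion $MB(E, C^{**}) \subset MB(E, C)^{**}$ for every f.d.\ $E \subset A$ and every $C^*$-algebra $C$ (by the moreover clause, it suffices to verify it for $C = \C$). Closing the cycle (ii)$\Rightarrow$(iii)$\Rightarrow$(i)$\Rightarrow$(ii) then yields all equivalences.

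For (i)$\Rightarrow$(ii), I would start with a max-contraction $u : A \to C/I$ and, for each f.d.\ $E \subset A$, view $u|_E$ as landing in $(C/I)^{**}$, which sits as a weak-$*$ closed direct summand of $C^{**}$. The previous theorem places $u|_E$ in $MB(E, C)^{**}$; Mazur convex combinations in $MB(E,C)$ followed by post-composition with the quotient map $q : C \to C/I$ produce max-contractions $v_n : E \to C$ with $qv_n \to u|_E$ in the norm of $MB(E, C/I)$. An iterative Kirchberg-style correction then runs as follows: given an approximate lift, its error is itself a max-contraction into $I$ of small $\|\cdot\|_{mb}$-norm, which is re-lifted by the same mechanism; a diagonal argument over an exhausting sequence $E_1 \subset E_2 \subset \cdots$ of f.d.\ subspaces of $A$ (permitted by separability) patches these into a global max-contraction $\hat u : A \to C$ lifting $u$. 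The m.b.\ norm of $\hat u$ is kept bounded by $1$ through a geometric $\epsilon$-budget $\sum_n \epsilon_n < \infty$ controlling each correction.

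For (iii)$\Rightarrow$(i), by the moreover clause it suffices to verify $MB(E, \C^{**}) \subset MB(E, \C)^{**}$ for each f.d.\ $E \subset A$. Given $w \in MB(E, \C^{**})$ with $\|w\|_{mb} \le 1$, I would realise $\C^{**}$ as a quotient $D/J$, with $D = \ell^\infty(\bb N, \C)$ and $J$ chosen so that an m.b.\ lift into $D$ corresponds precisely to a bounded sequence of max-contractions into $\C$ whose weak-$*$ cluster point is the target map. To activate $\cl F_{mc}$-LLP, one first extends $w$ to a max-contraction $\tilde w : A \to \C^{**} = D/J$ (using an Arveson-type extension in the m.b.\ category suited to a von Neumann target); then $\cl F_{mc}$-LLP applied to $\tilde w$ yields a max-contraction lift $E \to D$, i.e.\ a bounded sequence $v_n : E \to \C$ of max-contractions whose weak-$*$ limit in $MB(E, \C)^{**}$ is $w$, as required.

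The main obstacle is the extension step in (iii)$\Rightarrow$(i): producing $\tilde w : A \to \C^{**}$ with controlled $\|\cdot\|_{mb}$-norm. In the c.b.\ category the analogue is Wittstock/Arveson extension, but for m.b.\ maps with a non-injective von Neumann target one must exploit the specific structure of $\C^{**}$ --- notably that $\C$ is built to be universal for max-tensor computations --- and be content with extension into $\C^{**}$ rather than into arbitrary targets. A secondary subtlety in (i)$\Rightarrow$(ii) is telescoping the corrections without destroying the max-contraction property; this is handled by the geometric $\epsilon$-budget together with the good behaviour of $\|\cdot\|_{mb}$ under convex sums and composition with quotient maps.
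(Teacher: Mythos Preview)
Your overall architecture --- close the cycle via the LP characterisation \eqref{1} --- matches the paper, but there are two genuine gaps.

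\textbf{(i)$\Rightarrow$(ii).} The paper does \emph{not} patch local lifts by a diagonal argument. After producing, for each f.d.\ $E\subset A$, max-contractions $v_i:E\to C$ with $qv_i\to u_{|E}$ in norm (your Mazur step), it invokes \cite[Th.~3.5]{157}: under LP every $v_i:E\to C$ with $\|v_i\|_{mb}\le 1$ extends to $w_i:A\to C$ with $\|w_i\|_{mb}\le 1+\varepsilon$. One then has \emph{globally defined} approximate lifts $w_i:A\to C$, and Arveson's principle (pointwise-norm limits of liftable maps are liftable) finishes. Your iterative scheme instead tries to force $v_{n+1}|_{E_n}$ close to $v_n$ directly. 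But the discrepancy $v_{n+1}|_{E_n}-v_n$ (which is what lands approximately in $I$, not the error $u|_E-qv$) has mb-norm a priori only $\le 2$; subtracting an extension of it to $E_{n+1}$ destroys the mb-norm budget, and there is no mechanism in your outline to keep $\|v_{n+1}\|_{mb}$ near $1$ while achieving coherence on $E_n$. The extension theorem \cite[Th.~3.5]{157} is precisely the missing ingredient that converts local data to global data without this blow-up.

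\textbf{(iii)$\Rightarrow$(i).} Two points. First, your choice $D=\ell^\infty(\bb N,\C)$ need not admit a surjective $*$-homomorphism onto $\C^{**}$: since $\C^*$ is not separable, weak$^*$ \emph{sequential} limits from $\C$ do not in general reach every element of $\C^{**}$. The paper's Lemma uses $\ell_\infty(I,C)$ with $I$ the set of all weak$^*$ neighbourhoods of $0$ in $C^{**}$, takes an ultrafilter refining that filter, and identifies $C^{**}$ with $V/\ker Q$ for a suitable multiplicative domain $V\subset\ell_\infty(I,C)$. Second, the extension obstacle you flag is resolved concretely in the paper: one extends $u:E\to C^{**}$ to a decomposable map $v:A\to (C^{**})^{**}$ with $\|v\|_{dec}\le 1$ (this is the general extension property of mb/dec maps into biduals), and then composes with the canonical u.c.p.\ projection $P:(C^{**})^{**}\to C^{**}$ to obtain $u'=Pv:A\to C^{**}$ with $\|u'\|_{mb}\le 1$ and $u'_{|E}=u$. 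No special feature of $\C$ is needed here --- it works for any $C$.
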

  \begin{proof}
  Assume (i). Let $u\in MB(A,C/\cl I)$ with $\|u\|_{mb} \le 1$.
  Then by Kirchberg's characterization  of mb-maps (see \cite[Th. 14.1, p. 261]{P4}), 
  $u$ has dec-norm $\le 1$ when considered as taking values in
  $(C/\cl I)^{**}$.
  We have $ C^{**}\simeq  {\cl I}^{**}\oplus  (C/\cl I)^{**}$.
  Therefore there is a map $v: A \to C^{**}$ that lifts the map
  $ju: A \to (C/\cl I)^{**}$ with $j: C/\cl I \to (C/\cl I)^{**}$ denoting the inclusion.
  Consider now $v_{|E} : E \to C^{**}$. We now repeat a standard argument:
  by \eqref{1} we have a net of maps $v_i: E \to C$ with $\|v_i\|_{mb}\le1$
  tending weak* to $v_{|E} $, and hence, denoting by $q$ the quotient map,
  such that $qv_i$ tends weakly to $u_{|E}$. By Mazur's theorem, passing to convex combinations,
  we can  modify the maps $v_i: E \to C$ so that $qv_i$ tends (pointwise) in norm to $u_{|E}$. But now if $A$ has the LP
  by \cite[Th. 3.5]{157} for any $\vp>0$ the maps $v_i: E \to C$ extend to $w_i: A \to C$
  with  $\|w_i\|_{mb} \le 1+\vp$.
  Now a moment of thought (renormalizing and letting $\vp\to 0$) shows that 
  our initial map $u$ is such that $qu$ 
  can be approached pointwise   in norm  by maps of the form $qw$
  with $w$ in the unit ball of $MB(A,C)$.
Then Arveson's well known principle, ``pointwise limits of liftable maps are liftable" can be applied in this instance to show that
there is $\hat u$  in the unit ball of $MB(A,C)$ lifting $u$. This shows (i) implies (ii) (actually we showed that (i) implies (iii) and that (i) and (iii) together imply (ii)).
\\
(ii) implies (iii) is obvious.
  \\Assume (iii). Consider $u\in MB(E, C^{**} )$
  with $E\subset A$.
  Then by the extension property of mb maps (see \cite[Th. 14.6, p. 266]{P4} or \cite[p. 125 ]{154})
  $u$ extends to a dec map  $v: A \to (C^{**})^{**}$
  with   $\|v\|_{dec}\le 1$, meaning if $i_2: C^{**} \to (C^{**})^{**}$ denotes  the canonical inclusion we have $ v_{|E}=i_2 u$. Clearly there is 
  a u.c.p. projection $P:  (C^{**})^{**} \to C^{**}$, so that $Pv_{|E}= u$.
  Thus the map $u'=Pv$ extends $u$ and
    $\|u'\|_{mb} \le \|P\|_{mb}\|v\|_{mb}\le 1$. By Lemma \ref{klem} below
   we have $C^{**}=V/\cl I$ with $\cl I=\ker(Q)$. By the local liftability
    of $u': A \to V/\cl I$ there is a map ${u'}^E: E \to V$ 
    with mb-norm $\le 1$ lifting $u'_{|E}$. Composing ${u'}^E$ with the inclusion $V\subset \ell_\infty(I, C)$, we find a net
    of maps $v_i: E \to C$ with $\|v_i\|_{mb} \le 1$ tending weak*
    to $u'_{|E}$. But $u'_{|E}=u_{|E}$ since $u'$ extends $u$.
This proves that $A$ satisfies \eqref{1}, and hence has the LP.
  \end{proof}
  
  
  We will now include for the reader's convenience the 
  proof   
    (see e.g. \cite[p. 205 or p. 445]{157} for more details) 
    of the following probably well known fact, used above.

  \begin{lem}\label{klem} Let $C$ be any unital $C^*$-algebra. For some suitable set $I$,
  there is a unital $C^*$-subalgebra $V\subset \ell_\infty(I, C)$ and
  a surjective unital $*$-homomorphism $Q: V \to C^{**}$.
  Moreover $Q$ extends to a u.c.p. map on $\ell_\infty(I, C)$.

  \end{lem}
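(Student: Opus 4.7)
The plan is to realize $C^{**}$ as a von Neumann algebra acting on a Hilbert space via the universal representation, and then to build $V$ from bounded nets of approximants produced by Kaplansky density. Fix the universal representation $\pi\colon C \to B(H)$; by Sherman--Takeda, $C^{**}$ is canonically identified with $\pi(C)''$, so we may view $C \subset C^{**} \subset B(H)$ with $C$ strong-$*$ dense in $C^{**}$. Take $I$ to be the directed set of pairs $(F,n)$ with $F \subset H$ finite and $n \in \NN$, ordered componentwise.

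With this setup I would set
\[
V := \bigl\{ (a_i)_{i \in I} \in \ell_\infty(I,C) \ :\ (a_i) \text{ converges in the strong-$*$ topology of } B(H) \bigr\},
\]
and $Q((a_i)) :=$ the strong-$*$ limit, which automatically lies in $\pi(C)'' = C^{**}$. That $V$ is a unital $C^*$-subalgebra and $Q$ a unital $*$-homomorphism uses only joint continuity of multiplication and continuity of the involution in the strong-$*$ topology on norm-bounded sets, together with a standard $3\vp$-argument for norm-closedness of $V$. Surjectivity of $Q$ is exactly where the choice of $I$ pays off: given $x \in C^{**}$, Kaplansky density yields, for each $i = (F,n)$, some $a_i \in C$ with $\|a_i\| \le \|x\|$ and $\|(a_i-x)\xi\|, \|(a_i^*-x^*)\xi\| < 1/n$ for all $\xi \in F$; this family lies in $V$ and $Q$ sends it to $x$.

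For the u.c.p. extension, fix an ultrafilter $\omega$ on $I$ refining the directed order and define $\tilde Q \colon \ell_\infty(I,C) \to C^{**}$ by $\tilde Q((a_i)) := \lim_{i\to\omega} a_i$, the limit taken in the weak$^*$ topology of $C^{**}$ (existence from weak$^*$ compactness of bounded subsets of $C^{**} = (C^*)^*$). Unitality, linearity, and complete positivity of $\tilde Q$ follow from the corresponding preservation properties of weak$^*$ limits, the matrix-level statement coming from $M_n(\ell_\infty(I,C)) = \ell_\infty(I,M_n(C))$. That $\tilde Q$ extends $Q$ follows because strong-$*$ convergence of a bounded net in $C^{**}$ forces ultraweak, hence weak$^*$, convergence to the same limit, so any ultrafilter refinement of a strong-$*$ convergent net in $V$ recovers the strong-$*$ limit.

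The main point that requires care is the interplay of topologies: strong-$*$ convergence is what makes $Q$ multiplicative (weak-operator multiplication is only separately continuous on bounded sets), while the weak$^*$ topology is what permits $\tilde Q$ to be defined on arbitrary bounded nets and to be u.c.p.; compatibility of the two on $V$ rests on the coincidence of weak-operator and ultraweak topologies on norm-bounded subsets of a von Neumann algebra.
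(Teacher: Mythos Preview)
Your argument is correct, but it proceeds in the opposite order from the paper's. You construct $V$ first, as the algebra of strong-$*$ convergent bounded nets in the universal representation, so that multiplicativity of $Q$ is immediate from joint strong-$*$ continuity of the product on bounded sets, and surjectivity comes from Kaplansky density; only afterwards do you produce the u.c.p. extension $\tilde Q$ via an ultrafilter weak$^*$ limit. The paper instead defines the u.c.p. map $Q$ on all of $\ell_\infty(I,C)$ from the outset (weak$^*$ ultrafilter limit, with $I$ the weak$^*$ neighbourhoods of $0$ in $C^{**}$), and then \emph{takes $V$ to be the multiplicative domain of $Q$}; surjectivity of $Q_{|V}$ is obtained by first showing (via Goldstine) that $Q$ maps the closed unit ball onto the closed unit ball, and then invoking the standard Choi-type fact that any contraction sent by a u.c.p. map to a unitary must lie in the multiplicative domain. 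Your route is more hands-on and makes the $*$-homomorphism property transparent, at the cost of working in a concrete representation and tracking two operator topologies; the paper's route stays entirely in the weak$^*$ picture and replaces Kaplansky density by the multiplicative-domain trick, which is shorter once that machinery is available.
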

   \begin{proof}
 Let  $I$ be the set of all neighbourhoods
     of $0$ in the weak* topology of $C^{**}$. 
    Let $\cl U$ be an ultrafilter on $I$ finer than
     the latter filter of neighbourhoods.
    Let $V $ be the set of all $x=(x_i) \subset  \ell_\infty(I, C)$ such that
    $\lim_\cl U x_i $ exists in the weak* topology of $C^{**}$.
     Then the map
    $Q: \ell_\infty(I, C) \ni (x_i) \mapsto \lim_\cl U x_i $ is a u.c.p. map from
    $\ell_\infty(I, C)$ 
      to $C^{**}$. By the well known weak* density of the unit ball of a space in its bidual,   $Q$ maps the closed unit ball onto the closed unit ball. Let $V$ be its multiplicative domain, so that $Q_{|V}$ is a $*$-homomorphism.
   Letting $\cl I=\ker(Q_{|V})$ we obtain the result
   because all unitaries of  $C^{**}$ are in the range of
   $Q_{|V}$
   (see \cite[Th. 7.33 p.149]{157}).
     \end{proof}
     
     It might be useful to record that the subspace $V\subset \ell_\infty(I, C)$ is max-injective, meaning
     that for any other $C^*$-algebra $D$ (in particular $\C$)
     we have $D\otimes_{\max} V \subset D \otimes_{\max} \ell_\infty(I, C)$
     isometrically.


\begin{thebibliography}{JP95}

 \bibitem{BO}  N.P.  Brown  and  N.  Ozawa,  \emph{$\mathrm{C}^*$-algebras  and  finite-dimensional  approximations},  Graduate  Studies  in  Mathematics,  88,  American  Mathematical  Society,  Providence,  RI,  2008.  
   
  

    \bibitem{ES}
D. Enders and T. Shulman,
On the (local) lifting property, 
arxiv, 2024

\bibitem{JNVWY}  Z. Ji, A. Natarajan, T. Vidick,  J. Wright, and  H. Yuen,
MIP* = RE,   arxiv  Jan. 2020.

    \bibitem{Kir}  E. Kirchberg,    On  nonsemisplit  extensions,  tensor  products  and  exactness  of  group  $C^*$-algebras,
\emph{Invent.  Math.}  {\bf  112}  (1993),  449--489.


    \bibitem{Oz} N. Ozawa, On the lifting property for universal $C^*$-algebras of operator spaces, \emph{J. Operator Theory} {\bf 46} (2001), 579--591.
    
              \bibitem{P4}    
G.  Pisier,    \emph{Introduction  to  operator  space  theory},    Cambridge  University  Press,  Cambridge,  2003.  

    \bibitem{154} G.  Pisier,  {\it Tensor products of $C^*$-algebras and Operator spaces, The Connes-Kirchberg problem},
Cambridge University Press, 
London Math. Soc. Student texts, no 96, 2020.

      \bibitem{155}  G.  Pisier,  A  non-nuclear  $C^*$-algebra  with  the  Weak  Expectation  Property      and  the  Local  Lifting  Property,  Invent. Math.  222 (2020) 513--544.

      
      \bibitem{157}  G.  Pisier,   On the    Lifting  Property for $C^*$-algebras, J. Noncommut. Geom. 16 (2022), no. 3, 967--1006. 
 
\bibitem{158} G.  Pisier,  Operator spaces with the  WEP,       the OLLP and the Gurarii property, arxiv 2021.
 
 
\bibitem{160} G.  Pisier,  The lifting property for $C^*$-algebras : from local to global ? 
M\"unster J. Math. To appear.

 \bibitem{Vid}  T. Vidick, 
 MIP*=RE, A negative resolution to Connes' Embedding Problem
and Tsirelson's problem, Proceedings of the ICM 2022.


\end{thebibliography}
  \end{document}